\DeclareMathOperator{\aut}{Aut}
\DeclareMathOperator{\jac}{Jac} 
\DeclareMathOperator{\nm}{Nm}
\DeclareMathOperator{\im}{Im}
\DeclareMathOperator{\en}{End}
\DeclareMathOperator{\alb}{Alb}
\theoremstyle{plain}
\newtheorem{thm}{Theorem}[section]
\newtheorem{theorem}[thm]{Theorem}
\newtheorem{proposition}[thm]{Proposition}
\theoremstyle{definition}
\newtheorem{remark}[thm]{Remark}
\newtheorem{definition}[thm]{Definition}
\numberwithin{equation}{thm}
\newcommand{\sA}{{\mathcal A}}
\newcommand{\sB}{{\mathcal B}}
\newcommand{\sC}{{\mathcal C}}
\newcommand{\sK}{{\mathcal K}}
\newcommand{\sL}{{\mathcal L}}
\newcommand{\sU}{{\mathcal U}}
\newcommand{\sV}{{\mathcal V}}
\newcommand{\sX}{{\mathcal X}}
\newcommand{\sZ}{{\mathcal Z}}
\renewcommand{\P}{{\mathbb P}}
\newcommand{\Z}{{\mathbb Z}}
\newcommand{\rk}{{\rm rank}}
\newcommand{\Hom}{{\rm Hom}}
\begin{document}
	\title{Twists of Albanese varieties over function fields with large ranks}
	\author{Abolfazl Mohajer, Sajad Salami}
	\address{Johannes Gutenberg Universit\"at Mainz, Institut f\"ur Mathematik, Staudingerweg 9, 55099 Mainz, Germany}
	\address{Instítuto de Matemática e EstatísticaUniversidade Estadual do Rio do Janeiro, Brazil}
	\email{mohajer@uni-mainz.de}
	\email{sajad.salami@ime.uerj.br}
	\subjclass[2010]{14G05, 14H30, 14H40}
	\keywords{Prym variety, Rational points, Galois covering}
	\maketitle
	
	\begin{abstract}
		In this paper we construct abelian varieties of large Mordell-Weil rank over function fields.  We achieve this by using a generalization of the notion of Prym variety to higher dimensions and a structure theorem for the Mordell-Weil group of abelian varieties over function fields proven in our previous works. We consider abelian and dihedral covers of the projective space and apply the above results to the twists of their Albanese varieties.  
		
	\end{abstract}
\section{Introduction}
Let $A$ be an abelian variety over a field $k$ of characteristic $\geq 0$. Let $A(k)$ be the set of $k$-rational points of $A$. If $k$ is a number field or a  function field satisfying some mild conditions, then it is well-known that $A(k)$ is a finitely generated abelian group. The rank of this group is called the {\it Mordell-Weil rank} of $A$. Finding abelian varieties with large rank is an interesting problem in number theory. In this paper we investigate this problem by considering abelian and dihedral covers of algebraic varieties. Let $n\in \mathbb{N}$ be coprime to the characteristic of the field $k$  and such that $k$ contains a primitive $n$-th root of unity $\xi$. Suppose there is an embedding $G\hookrightarrow \aut(A)$ with $G$ a finite group of order $|G|=n$. Further, let $X,Y$ be smooth projective varieties over $k$ with function fields $L,K$ respectively. Let $f:X\to Y$ be a $G$-Galois covering. To such a covering we have associated in \cite{M2} an abelian varietiy $P(X/Y)$, the so-called \emph{Prym variety}, that generalizes the notion of Prym variety in the case of coverings of curves. The group $G$ is also the Galois group of the Galois field extension $L|K$. By the results of \cite{BS, Haz1}, the twist of $A$ by the extension $L|K$ is equivalent to the twist by the 1-cocyle $a=(a_g)\in Z^1(G,\aut(A))$ given by $a_g=g$, where in the notation $a_g$, $g$ is viewed as a group element and on the right side as an automorphism of $A$ corresponding to $g\in G$ (i.e., we identify $g$ with its image $g\in G\hookrightarrow \aut(A)$). By the results of \cite{M2} there is an isomorphism for $A_a(K)$ in terms of the Prym variety. Furthermore, \cite{M2} gives a description of the Prym variety of the $m$-times self product $\prod_i f$ of the $G$-cover $f:X\to Y$ with itself, which generalizes the results of \cite{Sal1} and \cite{Haz1} for cyclic and double covers. Such a product yields a $G$-Galois field extension (by considering the function fields of $\prod_i X$ and $\prod_i Y$) and hence a twist of the abelian varieties defined over the function field of these varieties.
In this paper, on the one hand we consider abelian covers $X\to\P^{\ell}$ of the $\ell$-dimensional projective space and using the above results show that the rank of the twisted Albanese variety $\widetilde{\alb(X)}$ becomes arbitrarily large if one takes $m$ to be large enough. Furthermore, we consider non-abelian dihedral covers $C\to\P^1$ of the projective line and show that the rank of the twist of the Jacobian $\jac_a(C)$ can be made arbitrarily large. In fact one can generalize this result also to general dihedral covers of higher-dimensional projective spaces $\P^{\ell}$. Our results also more generally apply to metacyclic covers of the projective spaces.

\section{The Prym variety of Galois coverings and twists}
Let $V$ be a smooth projective variety over a field $k$. The \emph{Albanese variety} of $V$ is  the initial object for the morphisms from $V$ to abelian varieties. If $V$ is defined over a field of characterictic zero, then it is the following abelian variety,
\begin{equation}\label{Def albanese}
	\alb(V)=H^0(V,\Omega^1_V)^*/H_1(V,\mathbb{Z})
\end{equation}
Let $X,Y$ be smooth projective varieties with $f:X\to Y$ a Galois covering with Galois group $G$. In other words there is a finite group $G$ with $|G|=n$ that has a $G$-linearized action on $X$ such that $Y\coloneqq X/G$ and $f:X\to Y$ is the  quotient map. By the universal property of the albanese variety there is an induced action of $G$ on $\alb(X)$. Let ${(\alb(X)^G)}^0$ be the largest abelian subvariety of $\alb(X)$ fixed (pointwise) under this action. Equivalently, ${(\alb(X)^G)}^0$ is the connected component of the identity of the subvariety $\alb(X)^G$ of fixed points of $\alb(X)$ under the action of $G$. \par Let $g\in G$ be an element of the group $G$.  The automorphism of $\alb(X)$ induced by $g$ is also denoted also by $g$ throughout the manuscript. Using this notation let $\nm G:\alb(X)\to\alb(X)$ be the \emph{norm endomorphism} of $\alb(X)$ given by $\nm G:=\sum_{g\in G}g$. This definition is motivated by \cite{RR}, Prop 3.1 in the case of curves and extends it to higher dimensions. \par The following definition of Prym variety for the covering $f$ is given in \cite{M2}, Definition 2.1.  
\begin{definition}\label{prymdef}
	Let $f:X\to Y$ be $G$-Galois covering of smooth projective varieties over a field $k$. The Prym variety $P(X/Y)$ of the covering $f$ is defined to be
	\begin{equation}
		P(X/Y):=\frac{\alb(X)}{{(\alb(X)^G)}^0}
	\end{equation}
	In particular $P(X/Y)$ is (isogeneous to) the complementary abelian subvariety in $\alb(X)$ of (the abelian subvariety) ${(\alb(X)^G)}^0$ .
\end{definition}
For the classical case of Prym varieties of double coverings of curves we refer to \cite{B}. The following result has been proven in \cite{M2}, Prop 2.2 and gives an equivalent description of the Prym variety and shows furthermore that it coincides with the Prym variety of covers of curves (see \cite{B}, \cite{LO} and \cite{RR}) up to isogeny.
\begin{proposition}\label{prym properties}
	With the above notation,
	$$P(X/Y)=\frac{\alb(X)}{\im\nm G} \sim {(\ker\nm G)}^0.$$
\end{proposition}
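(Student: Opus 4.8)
The plan is to establish the two assertions in turn: first that $\im\nm G$ coincides with ${(\alb(X)^G)}^0$, so that the quotient defining $P(X/Y)$ in Definition~\ref{prymdef} is the same as $\alb(X)/\im\nm G$; and then that this quotient is isogenous to ${(\ker\nm G)}^0$ by Poincar\'e reducibility.

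Write $A=\alb(X)$ and $n=|G|$. For the first step, I would observe that for every $h\in G$ one has $h\circ\nm G=\sum_{g\in G}hg=\nm G$, so every point of $\im\nm G$ is fixed by the $G$-action; since $\im\nm G$ is the image of a homomorphism of abelian varieties it is a connected abelian subvariety, whence $\im\nm G\subseteq{(\alb(X)^G)}^0$. For the reverse inclusion, set $B:={(\alb(X)^G)}^0$; on $B$ every $g\in G$ acts as the identity, so $\nm G|_B=\sum_{g\in G}\id_B=[n]_B$, which is surjective because multiplication by $n$ is surjective on any abelian variety. Hence $B=[n]_B(B)=\nm G(B)\subseteq\im\nm G$, and we conclude $\im\nm G={(\alb(X)^G)}^0$ and therefore $P(X/Y)=\alb(X)/\im\nm G$.

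For the isogeny statement I would invoke Poincar\'e reducibility to obtain a complementary abelian subvariety of $B=\im\nm G$ inside $A$, giving $A/B\sim$ that complement, and then show that ${(\ker\nm G)}^0$ is itself such a complement. The two ingredients are: (i) $B\cap\ker\nm G=\ker(\nm G|_B)=\ker[n]_B=B[n]$ is finite, so $B\cap{(\ker\nm G)}^0$ is finite; and (ii) since $\nm G\colon A\to A$ is a homomorphism with image $B$, it factors as $A\twoheadrightarrow B\hookrightarrow A$ and every component of $\ker\nm G$ has dimension $\dim A-\dim B$, in particular $\dim{(\ker\nm G)}^0=\dim A-\dim B$. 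Combining (i) and (ii), the abelian subvariety $B+{(\ker\nm G)}^0$ of $A$ has dimension $\dim B+\dim{(\ker\nm G)}^0=\dim A$, hence equals $A$; consequently the composite ${(\ker\nm G)}^0\hookrightarrow A\twoheadrightarrow A/B$ is an isogeny, which yields $P(X/Y)=A/\im\nm G\sim{(\ker\nm G)}^0$.

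The whole argument is essentially formal once the norm endomorphism $\nm G$ is available; the one place that needs a little care — and which I regard as the crux — is the equality $\im\nm G={(\alb(X)^G)}^0$, where the obvious inclusion must be upgraded to an equality using that $\nm G$ restricts to $[n]$ on the fixed part together with the surjectivity of $[n]$ on abelian varieties. The complementation step is then routine, the only non-automatic point being the finiteness of $B\cap{(\ker\nm G)}^0$, which follows from (i).
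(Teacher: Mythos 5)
Your proof is correct. The paper itself does not reprove this statement --- it simply cites \cite{M2}, Prop.\ 2.2 --- so there is no in-text argument to match word for word; your argument is the standard one and is complete as written: the identification $\im\nm G={(\alb(X)^G)}^0$ (the inclusion $\subseteq$ from $h\circ\nm G=\nm G$, the reverse from $\nm G|_B=[n]_B$ and surjectivity of multiplication by $n$), followed by the finiteness of $B\cap{(\ker\nm G)}^0$ and the dimension count $\dim{(\ker\nm G)}^0=\dim A-\dim B$, which together show that ${(\ker\nm G)}^0$ is a complement of $B$ and that ${(\ker\nm G)}^0\to A/B$ is an isogeny. Note that you never actually use Poincar\'e reducibility: your steps (i) and (ii) exhibit the complement directly, so that invocation can be dropped. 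A commonly seen variant of the second half argues instead from the relation $\nm G\circ\nm G=n\cdot\nm G$, which exhibits $\nm G$ (up to the scalar $n$) as the norm endomorphism attached to the abelian subvariety ${(\alb(X)^G)}^0$ and gives the complementarity of image and kernel at once; your route is more hands-on but buys the same conclusion with only the surjectivity of $[n]$ and elementary dimension theory for homomorphisms of abelian varieties.
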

\begin{remark}\label{prym for curves}
	The albanese variety of curves coincides with the Jacobian variety. Hence if $X$ and $Y$ are curves, the Prym variety $P(X/Y)$ coincides with the Prym variety for covers of curves, see \cite{LO} and \cite{M1}. In this case there are two homomorphisms: the norm homomorphism $\nm_f:\jac(X)\to\jac(Y)$ and the pull-back homomorphism $f^*:\jac(Y)\to\jac(X)$ and it holds that: $\nm G=f^*\circ\nm_f$, so that $P(X/Y)={(\ker\nm_f)}^0={(\ker\nm G)}^0$, see \cite{RR}, Prop 3.1.
\end{remark}

Given a $G$-Galois covering $f:X\to Y$ defined over $k$, one can form the $m$-times self product $ f_m: \prod_{i=1}^m X \to \prod_{i=1}^m Y$. Then the diagonal embedding $G\hookrightarrow \prod_i G:=G\times\cdots\times G$ gives a subgroup of $\prod_i G$ isomorphic to $G$. We denote this subgroup by $\tilde{G}$. This gives rise to an intermediate Galois covering $f'_m:\prod_{i=1}^m X\to(\prod_{i=1}^m X)/\tilde{G}$.  
Let us write $\mathcal{X}=\prod_{i=1}^m X$ and $\mathcal{Y}=(\prod_i X)/\tilde{G}$.
We are interested in the Prym variety $P(\mathcal{X}/\mathcal{Y})$. We have
\begin{proposition}[\cite{M2}, Prop 2.5]\label{prym product}
	With the above notation, there is an isogeny
	\begin{equation}
		P(\mathcal{X}/\mathcal{Y})\sim_k \prod_{i=1}^m P(X_i/ Y_i)
	\end{equation}
\end{proposition}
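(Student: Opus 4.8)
The plan is to reduce the statement to the description $P(-/-)=\alb(-)/\im\nm$ of Proposition \ref{prym properties} together with the fact that the Albanese functor carries finite products to products. First I would record the canonical $k$-isomorphism
\[
\alb(\mathcal{X})=\alb\Big(\prod_{i=1}^m X\Big)\;\cong\;\prod_{i=1}^m\alb(X),
\]
which in characteristic zero follows immediately from \eqref{Def albanese} and the Künneth decompositions $H^0\big(\prod_i X,\Omega^1\big)=\bigoplus_i H^0(X,\Omega^1)$ and $H_1\big(\prod_i X,\Z\big)=\bigoplus_i H_1(X,\Z)$, and in general from the universal property of the Albanese variety. Under this identification the diagonal subgroup $\tilde{G}\subset\prod_iG$ acts on $\prod_i\alb(X)$ componentwise: since the $\prod_iG$-action on $\prod_iX$ is the product action, the element $\tilde g=(g,\dots,g)\in\tilde G$ acts on $\prod_i\alb(X)$ as the ``diagonal'' endomorphism $\mathrm{diag}(g,\dots,g)$, with no off-diagonal contributions.

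Next I would compute the norm endomorphism of $\alb(\mathcal{X})$ attached to $\tilde G$. Summing the diagonal endomorphisms over $\tilde g\in\tilde G$ gives
\[
\nm{\tilde G}=\sum_{g\in G}\mathrm{diag}(g,\dots,g)=\mathrm{diag}\!\Big(\sum_{g\in G}g,\dots,\sum_{g\in G}g\Big)=\mathrm{diag}(\nm G,\dots,\nm G),
\]
i.e.\ $\nm{\tilde G}$ is the product endomorphism $\prod_{i=1}^m\nm G$ on $\prod_{i=1}^m\alb(X)$. Consequently $\im\nm{\tilde G}=\prod_{i=1}^m\im\nm G$ and $(\ker\nm{\tilde G})^0=\prod_{i=1}^m(\ker\nm G)^0$, the latter because the kernel of a product of homomorphisms is the product of the kernels and the identity component of a product of group schemes is the product of the identity components.

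Finally I would invoke Proposition \ref{prym properties} for the covering $f'_m:\mathcal{X}\to\mathcal{Y}=\mathcal{X}/\tilde G$, obtaining
\[
P(\mathcal{X}/\mathcal{Y})=\frac{\alb(\mathcal{X})}{\im\nm{\tilde G}}=\frac{\prod_i\alb(X)}{\prod_i\im\nm G}=\prod_{i=1}^m\frac{\alb(X)}{\im\nm G}=\prod_{i=1}^mP(X_i/Y_i),
\]
and likewise $(\ker\nm{\tilde G})^0=\prod_i(\ker\nm G)^0\sim\prod_iP(X_i/Y_i)$, which gives the asserted isogeny (in fact an equality under the quotient description). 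Since the product decomposition of $\alb(\mathcal{X})$, the diagonal $\tilde G$-action, and hence $\nm{\tilde G}$ are all defined over $k$, this is a $k$-isogeny, so $\sim_k$ holds. The one genuinely delicate point is the first step: verifying that $\alb$ commutes with finite products and that the induced $\tilde G$-action on $\prod_i\alb(X)$ is exactly the componentwise diagonal one; once this compatibility is established, the rest is a formal manipulation of kernels and images of product endomorphisms.
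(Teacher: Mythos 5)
Your proof is correct, and since the paper states this proposition only as a citation to \cite{M2}, Prop.~2.5 without reproducing an argument, there is nothing to diverge from: your route --- the identification $\alb(\mathcal{X})\cong\prod_{i=1}^m\alb(X)$, the observation that the diagonal subgroup $\tilde G$ acts componentwise so that $\nm \tilde G=\mathrm{diag}(\nm G,\dots,\nm G)$, and then Proposition~\ref{prym properties} applied to $f'_m$ --- is the natural derivation and is exactly how the statement is exploited later (cf.\ the isogeny (\ref{prym}) in the proof of Theorem~\ref{main1}). Two minor points deserve a word of care: the product decomposition of the Albanese over $k$ is cleanest with a $k$-rational point or in characteristic zero (both available in the paper's applications), and the quotient $\mathcal{Y}=\mathcal{X}/\tilde G$ need not be smooth, but since Definition~\ref{prymdef} and Proposition~\ref{prym properties} only involve $\alb(\mathcal{X})$ together with its $\tilde G$-action, neither issue affects your computation.
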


We recall from \cite{BS} the notion of twist of a smooth projective variety. Suppose $k^{\prime}/k$ is a $G$-Galois extension of fields. Let $\sA$ be a $G$-set and suppose $E$ is another $G$-set which is also a left $\sA$-set. Let $a=(a_g)\in Z^1(G,\aut(\sA))$ be a 1-cocycle of $\sA$. For any $g\in G$ and $x\in E$, we denote by $^gx$ the left action of $g$ on $x$. The $G$-set $E$ with this action of $G$ is denoted by $E_a$ and is called the twist of $E$ by $a$. If $X$ is a projective smooth variety over $k$, we denote by $\aut(X)$ the automorphism scheme of $X$ and let $a=(a_g)\in Z^1(G,\aut(X))$. There exists a projective variety $\widetilde{X}$ over $k$ and $k^{\prime}$-isomorphism $f^{\prime}:X\otimes_k k^{\prime}\to \widetilde{X}\otimes_k k^{\prime}$ such that $^g(f^{\prime})=f^{\prime}\circ a_g$ for every $g\in G$. We denote the variety $\widetilde{X}$ by $X_a$ and call it \emph{the twist} of $X$ by $a$. Of course the two notion of twist introduced above are compatible in the sense that the map $f^{\prime}:X(k^{\prime})\to \widetilde{X}(k^{\prime})$ induces an isomorphism between $X(k^{\prime})_a\cong X_a(k^{\prime})(=\widetilde{X}(k^{\prime}))$.\par  In this paper, we are mainly interested in the case where we have an abelian variety $A$ and its twist $A_a$ by a field extension $L|K$ or equivalently a twist by the 1-cocycle $a=(a_g)\in Z^1(G,\aut(A))$ given by $a_g=g$. For such a $1$-cocycle, let $\alb(a):=\alb(a_g)$ for $g\in G$ which satisfies also $1$-cocycle conditions, so that $\alb(a)\in Z^1(G,\aut(\alb(X)))$.

The relation between Albanese variety and its twists is given by the following proposition which is proven in \cite{Sal2}, Proposition 3.2.

\begin{proposition} \label{twist alb}
	The twist $\alb(X)_{\alb(a)}$ of $\alb(X)$ by the $1$-cocycle $\alb(a)$ is $K$-isomorphic to $\alb(X_a)$. Equivalently $\widetilde{\alb(X)}$, the twist of $\alb(\sX)$, is $K$-isomorphic to $\alb(\tilde{X})$.    
\end{proposition}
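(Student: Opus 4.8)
The plan is to realize the twist $\widetilde{\alb(X)}$ explicitly via the same Galois descent datum that defines $\alb(X_a)$, and then invoke the functoriality of the Albanese construction to identify the two. First I would recall that the twist $X_a$ comes with a $k'$-isomorphism $f':X\otimes_k k'\to X_a\otimes_k k'$ satisfying the cocycle relation ${}^g(f')=f'\circ a_g$ for all $g\in G$ (with $k'=L$, $k=K$). Applying the Albanese functor, which is defined over any field and commutes with base change along $L|K$ (since $L|K$ is separable, $\alb(X\otimes_K L)=\alb(X)\otimes_K L$ and likewise for $X_a$), I obtain an $L$-isomorphism $\alb(f'):\alb(X)\otimes_K L\to\alb(X_a)\otimes_K L$. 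The key point is to check that $\alb(f')$ satisfies ${}^g(\alb(f'))=\alb(f')\circ\alb(a)_g$: this follows by applying the functor $\alb(-)$ to the identity ${}^g(f')=f'\circ a_g$, using that $\alb$ is compatible with the $\gal(L|K)$-action (i.e. $\alb({}^g\varphi)={}^g(\alb\varphi)$, which holds because $\alb$ is defined over the base field) and that $\alb(a_g)=\alb(a)_g$ by definition of $\alb(a)$.

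Having produced an $L$-isomorphism $\alb(X)\otimes_K L\to\alb(X_a)\otimes_K L$ intertwining the cocycle $\alb(a)$ with the trivial cocycle (in the sense made precise by the descent relation above), I would then appeal to the fact that twists are classified up to $K$-isomorphism by cohomology classes in $H^1(G,\aut(-))$, together with the uniqueness part of Galois descent: the abelian variety $\alb(X)_{\alb(a)}$ is by definition the form of $\alb(X)$ corresponding to the cocycle $\alb(a)$, and what we have shown is precisely that $\alb(X_a)$ is another such form, with the isomorphism $\alb(f')$ exhibiting the matching descent data. By the standard equivalence between $G$-Galois descent data and $K$-forms (effectivity of descent for quasi-projective schemes), this forces a $K$-isomorphism $\alb(X)_{\alb(a)}\cong_K\alb(X_a)$. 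The second, equivalent formulation $\widetilde{\alb(X)}\cong_K\alb(\widetilde X)$ is just a restatement in the notation where $\widetilde X=X_a$ and $\widetilde{\alb(X)}=\alb(X)_{\alb(a)}$.

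The main obstacle, and the step deserving the most care, is the compatibility of the Albanese construction with the semilinear $\gal(L|K)$-action — that is, verifying $\alb({}^g\varphi)={}^g(\alb\varphi)$ for a morphism $\varphi$ defined over $L$ and $g\in\gal(L|K)$, and that $\alb(-)$ commutes with the base change $-\otimes_K L$. In characteristic zero one can argue through the analytic description \eqref{Def albanese} after choosing an embedding $L\hookrightarrow\C$, but a cleaner route is to use the universal property: $\alb(X\otimes_K L)$ represents the functor of morphisms from $X\otimes_K L$ to abelian varieties over $L$, this functor is manifestly stable under the $\gal(L|K)$-action, and the universal morphism is therefore unique up to unique isomorphism, whence Galois-equivariant. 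Once this functoriality is in place, the cocycle computation is formal and the descent argument is routine; I would present the functoriality lemma first and then deduce the proposition in a few lines. (This is essentially the argument of \cite{Sal2}, Proposition 3.2, which I would cite for the details.)
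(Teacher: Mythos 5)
Your argument is correct and coincides with the paper's treatment: the paper gives no independent proof of Proposition \ref{twist alb}, deferring to \cite{Sal2}, Proposition 3.2, and the proof there is exactly your route --- apply the Albanese functor to the defining $L$-isomorphism $f'$, use compatibility of $\alb(-)$ with base change and the Galois action to transport the cocycle relation to $\alb(a)$, and conclude by the uniqueness of the $K$-form attached to a descent datum. Nothing essential is missing, so citing \cite{Sal2} for the functoriality lemma, as you propose, is precisely what the paper does.
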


\section{The Main results}

Given an integer $n \geq 2$, let  $\pi : \sX' \rightarrow \sX$  be a $G$-Galois cover of smooth projective varieties of degree $n$, both as well as $\pi$ defined over a field $k$. Denote by  $\sK$ and   $\sL$  the function fields of $\sX$ and $\sX'$ respectively.
Assume that $\sA$ is an abelian variety such that there is an embedding $G\hookrightarrow\aut(\sA)$, and
let $\sA[n](k)$ be the  group of $k$-rational $n$-division points on $\sA$.
Define  $\tilde{\sA}$ to be the twist of $\sA$ by the $G$-extension $\sL|\sK$,
or equivalently, by  the  $1$-cocycle $a=(a_u) \in \sZ^1(G, \aut(\sA))$, where $a_{u}=u$ for  $u\in G$ (we denote the image of $u$ under $G\hookrightarrow\aut(\sA)$ again by $u$) and $G$ is the Galois group of the extension $\sL|\sK$.
The  following theorem  describes the structure of the Mordell-Weil  group of $\sK$-rational points on the twist $\tilde{\sA}$. This is Proven in \cite{M2}, Theorem 2.4 and generalizes the main results of \cite{Haz, Sal1, Wang}.
\begin{theorem}\label{prym-abelian}
	Notation being as above, we assume that there exists a  $k$-rational  
	point  on $ \sX'.$  Then, as an isomorphism of  abelian groups,  we have:
	\begin{equation}\label{rational isom}
		\tilde{\sA}(\sK) \cong \Hom_{k} (P({\sX'/\sX}),\sA) \oplus \sA [n](k).
	\end{equation}
	Moreover, if  $P({\sX'/\sX})$ is $k$-isogenous to $ \sA^m \times \sB$ for some integer $m>0$ and $\sB$ is an  abelian variety defined over $k$ so that none of its  irreducible components is $k$-isogenous to $\sA$,  then $$\rk(\tilde{\sA}(\sK))= m\cdot \rk (\en_{k}(\sA)).$$
\end{theorem}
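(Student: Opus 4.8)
The plan is to reduce $\tilde{\sA}(\sK)$ to a group of equivariant morphisms, decompose it by means of the Albanese variety of $\sX'$, and then read off the two summands; the rank formula will follow formally. First I would unwind the twist by Galois descent. Since $\tilde{\sA}$ becomes isomorphic to $\sA$ over $\sL$ by an isomorphism whose Galois obstruction is the $1$-cocycle $a_{u}=u$, the group $\tilde{\sA}(\sK)$ is identified with the set of $Q\in\sA(\sL)$ satisfying ${}^{u}Q=u^{-1}(Q)$ for all $u\in G=\gal(\sL|\sK)$, where on the right $u$ acts through $G\hookrightarrow\aut(\sA)$. Because $\sL=k(\sX')$ and $\sA$ is an abelian variety, $\sA(\sL)$ is the group of $k$-morphisms $\sX'\to\sA$ and the Galois action is precomposition with the deck transformations of $\sX'\to\sX$; hence $\tilde{\sA}(\sK)$ is precisely the group of $G$-equivariant $k$-morphisms $\sX'\to\sA$, with $G$ acting on the source as the deck group and on the target through the given embedding.

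Next I would exploit the hypothesis that $\sX'$ has a $k$-rational point $x_{0}$: fix the Albanese morphism $j\colon\sX'\to\alb(\sX')$ with $j(x_{0})=0$, so that by the universal property every $k$-morphism $\sX'\to\sA$ is uniquely $t_{c}\circ\psi\circ j$ for a homomorphism $\psi\in\Hom_{k}(\alb(\sX'),\sA)$ and a translation by $c\in\sA(k)$. Inserting this into the equivariance condition and separating the linear and constant contributions, one finds that $\psi$ must be a $G$-equivariant homomorphism $\alb(\sX')\to\sA$, while $c$ is tied to $\psi$ through the $1$-cocycle $u\mapsto j(u\cdot x_{0})$ recording the failure of $x_{0}$ to be $G$-fixed. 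The heart of the proof is then to identify the group of $G$-equivariant homomorphisms $\alb(\sX')\to\sA$ with $\Hom_{k}(P(\sX'/\sX),\sA)$. For this I would invoke Proposition~\ref{prym properties}: $P(\sX'/\sX)=\alb(\sX')/\im\nm G$ is complementary, up to isogeny, to the $G$-fixed abelian subvariety $(\alb(\sX')^{G})^{0}$, and the constraint imposed by $G$-equivariance (via the relation $\psi\circ\nm G=(\sum_{u\in G}u)\circ\psi$) makes such a homomorphism compatible with the splitting $\alb(\sX')\sim(\alb(\sX')^{G})^{0}\times P(\sX'/\sX)$, the contribution of the fixed factor being subsumed by the constant part. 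Carrying out this matching while keeping track of $c$ and the base-point cocycle, and using that $|G|=n$, yields the isomorphism of abelian groups $\tilde{\sA}(\sK)\cong\Hom_{k}(P(\sX'/\sX),\sA)\oplus\sA[n](k)$.

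The rank statement is then formal: when $P(\sX'/\sX)\sim_{k}\sA^{m}\times\sB$ with no irreducible component of $\sB$ being $k$-isogenous to $\sA$, Poincaré complete reducibility gives $\Hom_{k}(P(\sX'/\sX),\sA)\cong\en_{k}(\sA)^{\oplus m}\oplus\Hom_{k}(\sB,\sA)$ with $\Hom_{k}(\sB,\sA)$ finite, and since $\sA[n](k)$ is finite as well, $\rk\tilde{\sA}(\sK)=m\cdot\rk\en_{k}(\sA)$. I expect the main obstacle to be the middle step — turning the group of $G$-equivariant homomorphisms $\alb(\sX')\to\sA$ into precisely $\Hom_{k}(P(\sX'/\sX),\sA)$ and, at the same time, pinning the residual torsion produced by the base-point cocycle down to exactly $\sA[n](k)$ rather than a mere subgroup of it. This is where the existence of the $k$-rational point on $\sX'$ and the averaging endomorphism $\nm G$ enter essentially, and where the isogeny (rather than isomorphism) in $P(\sX'/\sX)\sim(\ker \nm G)^{0}$ must be handled so as not to disturb the torsion summand.
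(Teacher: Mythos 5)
The paper itself does not prove Theorem \ref{prym-abelian}: it is quoted from \cite{M2}, Theorem 2.4 (generalizing \cite{Haz1, Sal1, Wang}), so your attempt has to be measured against that standard argument, and your skeleton does follow it: describe $\tilde{\sA}(\sK)$ by Galois descent as the (anti-)equivariant points of $\sA(\sL)$, identify $\sA(\sL)$ with $k$-morphisms $\sX'\to\sA$, and use the $k$-rational point $x_0$ together with the universal property of $\alb(\sX')$ to split such a morphism into a homomorphism $\psi\colon\alb(\sX')\to\sA$ and a constant $c\in\sA(k)$ tied to the base-point cocycle. That reduction, and the final rank computation from Poincar\'e reducibility once (\ref{rational isom}) is granted, are fine.

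The genuine gap is that the two steps which carry the whole content of the theorem are only gestured at, and the one justification you do give is insufficient as stated. The equivariance conditions read $\psi\circ u_*=u\circ\psi$ for all $u\in G$ (up to the choice of convention) and $u(c)-c=\psi(j(u\cdot x_0))$. The relation you invoke, $\psi\circ\nm G=(\sum_{u\in G}u)\circ\psi$, does not show that $\psi$ kills $\im\nm G$ and hence does not make $\psi$ factor through $P(\sX'/\sX)=\alb(\sX')/\im\nm G$: it only gives $\psi(\im\nm G)\subseteq\im\bigl(\sum_{u\in G}u\bigr)$, which vanishes in the quadratic case (where $\sum_u u=1+(-1)=0$ in $\en_k(\sA)$) but not for an arbitrary embedding $G\hookrightarrow\aut(\sA)$; conversely you also need an argument that every element of $\Hom_k(P(\sX'/\sX),\sA)$ produces an equivariant $\psi$, which is not automatic either. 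Similarly, for $\psi=0$ your constant part is cut out by $u(c)=c$ for all $u$, i.e.\ $c\in\sA(k)^G$, and identifying this with exactly $\sA[n](k)$ requires a property of the $G$-action on $\sA$ (for instance $\sum_{u\in G}u=0$ in $\en_k(\sA)$, whence $nc=\sum_u u(c)=0$), not merely $|G|=n$. You explicitly flag both points as ``the main obstacle,'' but flagging them does not close them: the norm/averaging mechanism that converts equivariant homomorphisms into homomorphisms from the Prym variety and pins the constant part to $n$-torsion is exactly what \cite{M2} supplies, and it is also where the hypotheses on the action enter (in the paper's applications, $\sA=\alb(\sX)$ with the deck action and $\alb(\P^\ell)=0$, so these conditions do hold). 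As it stands, your proposal proves the reduction to equivariant morphisms, but not the isomorphism (\ref{rational isom}).
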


We apply Theorem \ref{prym-abelian} in the following situations. First, we consider abelian Galois covers $X\to\mathbb{P}^{\ell}$ with Galois group $G$, where for an integer $\ell \geq 1$, we denote by  ${\mathbb P}^\ell$ the $\ell$-dimensional projective space over $\bar k$ an algebraically closed field containing the number field $k$. For an integer $m\geq 1$, we define $U_m$ to be the $m$-times self-fiber product of $X$ over $k$ and we let $V_m$ be the quotient of $U_m$ by $G$. Considering a non-singular model for $X$, say $\sX$, we obtain  the non-singular models $\sU_m$ and $\sV_m$ for $U_m$ and $V_m$, respectively.
Denoting by $\sK$ and $\sL$, the function field of $\sU_m$ and $\sV_m$, respectively,  we obtain the following theorem which generalizes
Theorem 1.2 of \cite{Sal1} and Theorem 1.1 of \cite{Sal2}.

\begin{theorem} \label{main1}
	Notation being as above,  we  assume that there exists a $k$-rational point on $X$ and hence on $\sX$. Then, as an isomorphism of  abelian groups, we have:
	$$\widetilde{\alb(\sX)}(\sK) \cong \alb(\tilde{\sX})(\sK) \cong  \big (\en_k( \alb(\sX))\big)^m \oplus \alb(\sX)[n](k))$$
	and hence, $$\rk (\widetilde{\alb(\sX)}(\sK))= m\cdot \rk (\en_{k}(\alb(\sX))),$$
	where $\en_{k}(*)$ is  the ring of endomorphisms over $k$ of its origin $(*)$ and $\rk(\en_{k}(*))$ denotes its rank as a $\Z$-module.
\end{theorem}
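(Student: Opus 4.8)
The plan is to apply Theorem~\ref{prym-abelian} to the degree-$n$ $G$-Galois cover $\sU_m\to\sV_m$, taking $\sA=\alb(\sX)$ with $G$ acting on $\alb(\sX)$ through the automorphisms induced, by functoriality of the Albanese variety, from the $G$-action on $\sX$; for the abelian covers of $\P^\ell$ in question one first has to check that this induced action is faithful, so that the embedding $G\hookrightarrow\aut(\alb(\sX))$ required by Theorem~\ref{prym-abelian} is in place. Granting this, the first isomorphism $\widetilde{\alb(\sX)}(\sK)\cong\alb(\tilde\sX)(\sK)$ is a direct instance of Proposition~\ref{twist alb}, and by Theorem~\ref{prym-abelian} it then remains to (i) produce a $k$-rational point on $\sU_m$ and (ii) compute the Prym variety $P(\sU_m/\sV_m)$. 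Point (i) is immediate: the statement hypothesizes a $k$-rational point on $X$ and hence on $\sX$, and its diagonal image lies on $\sU_m$, which we may take to be $\prod_{i=1}^m\sX$.

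For (ii) I would first treat $m=1$ and show $P(\sX/\P^\ell)=\alb(\sX)$. By Proposition~\ref{prym properties} this amounts to proving that the norm endomorphism $\nm G=\sum_{g\in G}g$ of $\alb(\sX)$ vanishes. Now $\nm G$ factors through the Albanese variety of the quotient $\sX/G$: this is the higher-dimensional counterpart of the relation $\nm G=f^*\circ\nm_f$ recorded for curves in Remark~\ref{prym for curves}, and it follows by duality from the fact that on $\pic^0(\sX)$ the operator $\prod_{g\in G}g^*$ equals the composite of the norm $\pic^0(\sX)\to\pic^0(\sX/G)$ with pullback along the quotient map. Since $\sX/G=\P^\ell$ has trivial Albanese, we conclude $\nm G=0$, and then Proposition~\ref{prym properties} gives $P(\sX/\P^\ell)=\alb(\sX)/\im\nm G=\alb(\sX)$.

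For general $m$ one argues in the same spirit: $\alb(\sU_m)=\alb\big(\prod_{i=1}^m\sX\big)=\alb(\sX)^m$, and the norm endomorphism attached to the diagonal Galois group $\tilde G\cong G$ of $\sU_m\to\sV_m$ acts coordinatewise as $\nm G$, hence is again zero; so Proposition~\ref{prym properties} yields the equality $P(\sU_m/\sV_m)=\alb(\sX)^m$, refining the $k$-isogeny furnished by Proposition~\ref{prym product}. Feeding this into Theorem~\ref{prym-abelian} gives
\begin{align*}
\widetilde{\alb(\sX)}(\sK) &\cong \Hom_k\big(\alb(\sX)^m,\alb(\sX)\big)\oplus\alb(\sX)[n](k)\\
&\cong \big(\en_k(\alb(\sX))\big)^m\oplus\alb(\sX)[n](k),
\end{align*}
where we used $\Hom_k(\alb(\sX)^m,\alb(\sX))\cong\en_k(\alb(\sX))^m$; and for the rank formula we invoke the ``moreover'' clause of Theorem~\ref{prym-abelian} with $\sB=0$, which vacuously has no irreducible component $k$-isogenous to $\alb(\sX)$, obtaining $\rk\big(\widetilde{\alb(\sX)}(\sK)\big)=m\cdot\rk\big(\en_k(\alb(\sX))\big)$.

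The step I expect to be the real obstacle — and where the geometry of the covers actually enters — is the faithfulness of the induced $G$-action on $\alb(\sX)$, without which Theorem~\ref{prym-abelian} does not apply with $\sA=\alb(\sX)$; I would derive it from the explicit description of abelian (and, more generally, metacyclic) covers of $\P^\ell$ together with the $G$-eigenspace decomposition of $H^0(\sX,\Omega^1_\sX)$. By contrast, the vanishing of $\nm G$, the identification $P(\sU_m/\sV_m)=\alb(\sX)^m$, and the lifting of the $k$-rational point to the smooth models are routine, the last of these because $\alb$, the norm endomorphism, and the Prym variety are all birational invariants.
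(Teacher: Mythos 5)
Your argument is correct and follows the same skeleton as the paper's proof: since $\alb(\P^\ell)=0$, Proposition \ref{prym properties} gives $P(\sX/\P^\ell)=\alb(\sX)$, Proposition \ref{prym product} (or, as you note, the coordinatewise vanishing of the norm on $\alb(\sU_m)=\alb(\sX)^m$) gives $P(\sU_m/\sV_m)\sim_k\alb(\sX)^m$, and then Proposition \ref{twist alb} and Theorem \ref{prym-abelian} yield the displayed isomorphism and the rank formula. Where you diverge is in what you leave out and what you add. The paper spends most of the proof on explicit coordinates: Yun's description of abelian covers of $\P^\ell$ by equations $w_j^{n_j}=f_j$, the resulting equations (\ref{ho1}) for $\sV_m$ and (\ref{ho2}) for the twist, and the $m$ explicit $\sK$-rational points $P_i$ whose Albanese images $\tilde Q_i$ are exhibited as independent generators of $\alb(\tilde\sX)(\sK)$. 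None of that is needed for the stated isomorphism or rank formula (your abstract route delivers both), but it is what makes the construction effective, producing the twist and generators explicitly; your version loses that byproduct. Conversely, you are more careful on two points the paper treats tersely or not at all: you justify $\im\nm G=0$ by factoring the norm through $\alb(\sX/G)=\alb(\P^\ell)$ (the paper simply asserts the conclusion from $\alb(\P^\ell)=0$), and you flag that Theorem \ref{prym-abelian} is stated for an embedding $G\hookrightarrow\aut(\sA)$, so faithfulness of the induced action on $\alb(\sX)$ should be checked when $\sA=\alb(\sX)$. The paper takes this for granted and does not verify it; it is the hypothesis under which the whole section operates rather than "where the geometry enters" in the paper's argument (there the geometry enters through the explicit equations), so your plan to prove faithfulness via the eigenspace decomposition of $H^0(\sX,\Omega^1_\sX)$ is a legitimate strengthening, though you should be aware it can fail for degenerate covers (e.g.\ when subgroups act on $\alb(\sX)$ by translations), in which case one must either exclude such covers or check that the cocycle formulation of the twist in Theorem \ref{prym-abelian} does not actually require injectivity.
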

\begin{proof}
	Without loss of generality, we may identify   $\bar k$  with $\mathbb C$ the field of complex numbers.
	By the main result of G. Yun in \cite{Yun}, any finite abelian Galois cover $X\to\mathbb{P}^{\ell}$ with $\ell \geq 1$ is determined by a collection of equations of the  form $w^{n_j}_j=f_j(x_1,\cdots, x_\ell)$ for $j=1,2,\dots, r$ for some integer $r\geq 1$ where
	$n_1, \cdots, n_r$ are integers such that $n_j$ divides $n_{j+1}$ for $j=1,\cdots, r-1$.
	Hence, the fibered product $U_m=X^{(1)} \times_k \cdots \times_k X^{(m)}$  for any integer $m\geq 1$, where $X^{(i)}$ is a copy of $X$, can be defined by  the affine equations $w_{i,j}^{n_j}=f_j (x_{i ,1 },\cdots, x_{i, \ell })$ for   $1 \leq i \leq m$ and $1 \leq j \leq r$.
	Now, consider the fibered product
	$\sU_m=\sX_n^{(1)} \times_k \cdots \times_k \sX_n^{(m)}$, where $\sX_n^{(i)}$ is a non-singular
	projective model of $X^{(i)}$ for
	each $1\leq i \leq m$. Note that $\sU_m$  can be viewed as a non-singular model of $U_m$.
	Denote by $L$ and $\sL$ the function fields of $U_m$ and $\sU_m$, respectively.
	Let $V_m$ and $\sV_m$ be the quotient of $U_m$ and $\sU_m$ by $G$ and $\bar{G}$, the image of $G$ in $\prod_{i=1}^m G$,
	and denote by $K$ and $\sK$ the function fields of $V_m$ and $\sV_m$, respectively.
	Then, both of the extensions $L|K$ and $\sL| \sK$ are finite abelian extension of order $n$.
	Indeed, we have $L\subset k(x_{i, e}, w_{i, j}\mid 1 \leq e \leq \ell,\  1 \leq i \leq m, \ 1 \leq j \leq r)$
	where  $x_{i, e}$'s are independent transcendental  variables and each $w_{i,j}$
	satisfy the above equations.
	Then, by definition, $K$ is the subfield of $G$-invariant elements of $L$, i.e.,
	$$K=L^G\subseteq  k ( x_{i, e},  w_{1, j}^{n_1-1}\cdot w_{2,j} , \ldots, w_{1,j}^{n_1-1}\cdot w_{m, j}).$$
	Since $(w_{1,j}^{n_1-1}\cdot w_{i+1, j})^{n_j}=f_j^{n_j-d_j}(x_{1, 1},\cdots, x_{1, \ell})^{n-1}f_j(x_{i+1, 1}, \cdots, x_{i+1, \ell})$  for $1 \leq i \leq  m-1$,  where $d_j= n_j/n_1$ for $1\leq j \leq r$, so by defining  $z_{i,j}:=w_{1,j}^{n_1-1}\cdot w_{i+1, j}$   the variety  $V_m$ can be expressed by the equations
	\begin{equation}
		\label{ho1}
		z_{i,j}^{n_j}=f_j(x_{1, 1},\cdots, x_{1, \ell})^{n_j- d_j}f_j(x_{i+1, 1}, \cdots, x_{i+1, \ell}) \ (i=1,\ldots, m-1, \ j=1,\cdots, r).
	\end{equation}
	Thus, the extension $L|K$ is  an abelian $G$-extension determined by
	$w_{1,j}^{n_1}=f_j(x_{1, 1},\cdots, x_{1, \ell})$, i.e.,
	$$L=K(w_{1,j}\mid j=1,\dots, r)\subseteq k(x_{i,e},    z_{i, j})(w_{1,j}\mid i=1,\ldots, m-1, \ j=1,\dots, r).$$
	We remark that the abelian field extension $\sL|\sK$ can be determined by considering the homogenization of the equations $w_{1,j}^{n_1}=f_j(x_{1, 1},\cdots, x_{1, \ell})$. Furthermore, the variety $\sV_m$ can be expressed by the homogenization of equations (\ref{ho1}).
	Let  $\tilde{X}$   and $\tilde{\sX}$   be the twists  of $X$  and  $\sX$   by the extensions $L|K$ and $\sL|\sK$, respectively.
	Then, as in Example 2.6 of \cite{M2}, one can check that the twist $\tilde{X}$ is given by the following set of affine equations
	\begin{equation}
		\label{ho2}
		f_j(x_{1, 1}, \cdots, x_{1,\ell})^{d_j}  z_{1,j}^{n_j}=f_j(x_{i+1, 1}, \cdots, x_{i+1,\ell}),  (i=1,\ldots, m-1, \ j=1,\dots, r)
	\end{equation}
	and  $\tilde{\sX}$ can be determined by its homogenization.
	Moreover, it is  easy to  check that $\tilde{X}$ contains the $m$   $K$-rational points:
	\begin{equation}
		\label{Kpoint}  
		P_1:=\left( x_{1, 1},\dots, x_{1,\ell},  1\right),
		\text{and} \ P_{i+1}:=(x_{i+1, 1}, \dots,  x_{i+1, \ell},  w_{i+1,j}/w_{1, j}),\ (1\leq i\leq  m-1).
	\end{equation}
	Let us  denote by $\tilde{P}_i$ the point corresponding to $P_i$'s on $\tilde{\sX}$.
	Now, by  applying Proposition \ref{prym properties} to  the  $G$-cover $\pi: \sX\rightarrow \P^\ell$, we obtain
	$$P({\sX^{(i)}/\P^\ell}) =\frac{\alb(\sX^{(i)})}{\im\nm G}.$$
	Since $\alb(\P^\ell)=0$, we have  $P({\sX^{(i)}/\P^\ell})=\alb(\sX^{(i)})=\alb(\sX)$ for  $i=1, \ldots, m$ , and hence
	using  Proposition \ref{prym product}, we get the following  $k$-isogeny of abelian varieties
	\begin{equation}
		\label{prym}
		P({\sU_m/\sV_m}) \sim_k \prod_{i=1}^m P({\sX^{(i)}/\P^\ell}) = \alb(\sX)^m.
	\end{equation}
	Now, let us consider the $1$-cocycle  $a=(a_g) \in \sZ^1(\bar{G}, \aut(\alb(\sX)))$ defined  by $a_{g}=g$. Using Proposition \ref{twist alb}, we conclude that
	$\widetilde{\alb(\sX)}\sim_\sK \alb(\tilde{\sX})$.
	Thus, by  Theorem \ref{prym-abelian}  for $\sX'=\sU_m$,  $\sX=\sV_m$, and $\sA =\alb(\sX)$, we get:
	\begin{align*}
		\widetilde{\alb(\sX)} (\sK)  & \cong  \Hom_{k} (P({\sU_m/\sV_m}), \alb(\sX)) \oplus \alb(\sX)[n](k)\\
		& \cong  \Hom_{k} (\alb(\sX)^m , \alb(\sX)) \oplus \alb(\sX)[n](k)\\
		& \cong (\en_{k}(\alb(\sX)))^m    \oplus \alb(\sX)[n](k).
	\end{align*}
	We denote by $\tilde{Q}_i$  the image of $\tilde{P}_i$'s by  the Albanese map
	$\tilde{\alpha}: \tilde{\sX} \rightarrow \alb(\tilde{\sX})$  for  $i=1,\ldots, m$. Then,
	by tracing back the above isomorphisms, one can  see that the points
	$\tilde{Q}_i$'s
	form a  subset of independent generators for the Mordell-Weil group $\alb(\tilde{\sX})(\sK)$. Hence, as $\Z$-modules,  we have
	$$\rk (\widetilde{\alb(\sX)} (\sK) )=\rk(\alb(\tilde{\sX}))(\sK)=  m \cdot \rk (\en_{k}(\alb(\sX))).$$
	Therefore, we have completed the proof of Theorem \ref{main1}.
	
\end{proof}

\

Next, we apply Theorem \ref{prym-abelian} to the case of dihedral covers of $\P^1$. The dihedral group of order $2n$ is the group with presentation
\[D_n=\langle \sigma, \tau\mid \sigma^n=\tau^2=1, \tau\sigma\tau=\sigma^{-1}\rangle.\] This group sits in the following exact sequence
\begin{equation}
	0\to\Z_n\to D_n\to \Z_2\to 0.
\end{equation}
A $D_n$-Galois cover $f:X\to Y$ corresponds to the function field extension $k(X)=k(Y)(u,z)$, where $u^2=f\in k(Y)$ and $z^n=g\in k(Y)(u)$, see \cite{CP}, Section 5. The map $f:X\to Y$ factors as $X\xrightarrow{p} W\xrightarrow{q}Y$, where $W=X/\langle\sigma\rangle$, $p$ is a $\Z_n$-cover and $q$ is a $\Z_2$-cover.\par If $Y=\P^1$, we know that $k(\P^1)=k(x)$. Consider a $D_n$-cover $C\to\mathbb{P}^1$ and let $\mathcal{C}_m=\prod_{i=1}^m C$, i.e., the product of $m$ copies of the same dihedral cover as before. We denote the image of $D_n$ under the diagonal embedding $D_n\hookrightarrow\prod_{i=1}^m D_n$ again by $D_n$. Set $\mathcal{Y}_m=\mathcal{C}_m/D_n$. By Proposition \ref{prym product}, we have that
\begin{equation}
	P(\mathcal{C}_m/\mathcal{Y}_m)=\prod_i P(C/\mathbb{P}^1)
\end{equation}
By Remark \ref{prym for curves}, $P(C/\mathbb{P}^1)={(\ker\nm_f)}^0$. However as
$\jac(\mathbb{P}^1)=0$, it follows that $P(C/\mathbb{P}^1)=\jac(C)$. Now Proposition \ref{prym product} gives that
\begin{equation}\label{product for P1}
	P(\mathcal{C}_m/\mathcal{Y}_m)=\prod_i
	P(C/\mathbb{P}^1)=(\jac(C))^m.
\end{equation}

Let $L$ be the function field of $\sC_m$. Then $L=k(x_1,\dots, x_m, u_1,\dots,u_m, z_1,\dots,z_m)$. Let $L^{\prime}=k(x_1,\dots, x_m, u_1,\dots,u_m)$. Set $L^{\prime}_1={L^{\prime}}^{\Z_2}=k(x_1,\dots, x_m, u_1u_{2},\dots,u_1u_m)$.  Setting $U_i=u_1u_{i+1}$, it holds that $U_i^2=f(x_1)f(x_{i+1})$ for $1,\dots, m-1$. So $L^{\prime}=L^{\prime}_1(u_1)$. It holds that $L=L^{\prime}(z_1,\dots,z_m)$. In this case, $L=L^{\prime}(s_1,\dots,s_m, z_1,\dots,z_m)$, where $s_i$ are the coordinates of $W_i$. $L^{\prime}_2={L}^{\Z_n}=k(t_1,\dots, t_m, z_1^{n-1}z_2,\dots,z_1^{n-1}z_m)$ and so $L=L^{\prime}_2(z_1)$. Setting $Z_i=z_1^{n-1}z_{i+1}$, it holds that $Z_i^n=g(s_1)^{n-1}g(s_{i+1})$ for $1,\dots, m-1$. Suppose $C_a$ is the twist of the dihedral cover $C$ by the extension $L\mid K$. Then $C_a$ can be defined by the equation
\begin{equation}
	\left\{
	\begin{array}{ll}
		f(x_1)U^2=f(x) \\
		g(s_1)Z^n=g(s),
	\end{array}
	\right.
\end{equation}
A computation shows that $C_a$ has the following rational points.

\[P_1=(x_1,s_1,1,1), P_i=(x_{i+1},s_{i+1}, U_i/f(x_1), Z_i/g(s_1)), i=1,\dots, m-1.\]

We define the 1-cocycle $Z^1(G,\aut(C))$ by $a_g=g$. Let $\jac(C)_a$ be the twist corresponding to this 1-cocycle. By applying \ref{rational isom} and \ref{product for P1}, it follows that
\begin{align*}
	\jac(C)_a(K)\cong\Hom_k(P(\mathcal{C}_m/\mathcal{Y}_m),\jac(C))\oplus \jac(C)[2n](k)\\
	\cong \Hom_k((\jac(C))^m,\jac(C))\oplus \jac(C)[2n](k)\\
	\cong \en_k(\jac(C))^m\oplus \jac(C)[2n](k).
\end{align*}
So that $\rk(\jac(C)_a(K))= m\cdotp \rk(\en_k(\jac(C)))$. Let $Q_i$ be the image of $P_i$ under the embedding $C_a\hookrightarrow \jac(C_a)$. Then, the $Q_i$ are among the generators of $\jac(C)_a(K)$.

\begin{remark}
	By similar arguments as in the proof of Theorem \ref{main1} and the above example, one can easily show that the same results hold for dihedral covers of the projective space $\P^{\ell}$ for for all $l\geq 1$. In fact the above results can be generalized to metacyclic Galois covers of $\P^{\ell}$.
\end{remark}

\end{document}